\definecolor{blue}{rgb}{0.1,0.2,0.5}
\definecolor{brown}{rgb}{0.6,0.6,0.2}
\theoremstyle{plain}
\newtheorem{theorem}{Theorem}
\newcommand{\newtheoremwithcrefformat}[2]{%
  \newtheorem{#1}[theorem]{#2}%
  \crefformat{#1}{##2\MakeUppercase#1~##1##3}%
  \Crefformat{#1}{##2\MakeUppercase#1~##1##3}%
}
\newcommand{\newseptheoremwithcrefformat}[2]{%
  \newtheorem{#1}{#2}%
  \crefformat{#1}{##2\MakeUppercase#1~##1##3}%
  \Crefformat{#1}{##2\MakeUppercase#1~##1##3}%
}
\crefname{theorem}{Theorem}{Theorems}
\theoremstyle{nonumberplain}
\newtheorem{proof}{Proof}
\def\cqedsymbol{\ifmmode$\lrcorner$\else{\unskip\nobreak\hfil
\penalty50\hskip1em\null\nobreak\hfil$\lrcorner$
\parfillskip=0pt\finalhyphendemerits=0\endgraf}\fi}
\tikzset{
    position/.style args={#1:#2 from #3}{
        at=(#3.#1), anchor=#1+180, shift=(#1:#2)
    }
}
\let\originalleft\left
\let\originalright\right
\renewcommand{\left}{\mathopen{}\mathclose\bgroup\originalleft}
\renewcommand{\right}{\aftergroup\egroup\originalright}
\definecolor{RED}{RGB}{255,0,0}
\newlength{\RoundedBoxWidth}
\newsavebox{\GrayRoundedBox}
\newenvironment{GrayBox}[1]%
   {\setlength{\RoundedBoxWidth}{.93\textwidth}
    \def\boxheading{#1}
    \begin{lrbox}{\GrayRoundedBox}
       \begin{minipage}{\RoundedBoxWidth}}%
   {   \end{minipage}
    \end{lrbox}
    \begin{center}
    \begin{tikzpicture}%
       \node(Text)[draw=black!20,fill=white,rounded corners,%
             inner sep=2ex,text width=\RoundedBoxWidth]%
             {\usebox{\GrayRoundedBox}};
        \coordinate(x) at (current bounding box.north west);
        \node [draw=white,rectangle,inner sep=3pt,anchor=north west,fill=white]
        at ($(x)+(6pt,.75em)$) {\boxheading};
    \end{tikzpicture}
    \end{center}}
\newenvironment{defproblemx}[2][]{\noindent\ignorespaces%
                                \FrameSep=6pt%
                                \parindent=0pt%
                \vspace*{-1.5em}
                \ifthenelse{\isempty{#1}}{%
                  \begin{GrayBox}{\textsc{#2}}%
                }{%
                  \begin{GrayBox}{\textsc{#2} parameterized by~{#1}}%
                }
                \begin{tabular*}{\textwidth}{@{\hspace{.1em}} >{\itshape} p{1.8cm} p{0.8\textwidth} @{}}%
            }{
                \end{tabular*}%
                \end{GrayBox}%
                \ignorespacesafterend
            }
\newcommand{\defproblema}[3]{%
  \begin{defproblemx}{#1}
    {\bf Instance:}  & #2 \\
    {\bf Goal:} & #3
  \end{defproblemx}
}%
\newcommand{\frontpageformat}{arxiv}
\begin{document}

\ifthenelse{\equal{\frontpageformat}{submission}}{%
\author{anonymous}
\title{A note on independent sets in~sparse-dense~graphs}
\begin{titlepage}
\def\thepage{}
\thispagestyle{empty}
\maketitle
}{%
\author[1]{Akanksha Agrawal}
\author[2]{Henning Fernau}
\author[2]{Philipp Kindermann}
\author[2]{Kevin Mann}
\author[3,4]{Uéverton~S.~Souza}

\affil[1]{Indian Institute of Technology Madras,  India}
\affil[2]{University of Trier, Germany}
\affil[3]{Fluminense Federal University, Brazil}
\affil[4]{University of Warsaw, Poland}

\title{Recognizing well-dominated graphs is coNP-complete%
\thanks{
This research has received funding from Rio de Janeiro Research Support Foundation (FAPERJ) under grant agreement E-26/201.344/2021,  National Council for Scientific and Technological Development (CNPq) under grant agreement 309832/2020-9, and the European Research Council (ERC) under the European Union's Horizon 2020 research and innovation programme under grant agreement CUTACOMBS (No. 714704).\\
{akanksha@cse.iitm.ac.in},  {fernau@uni-trier.de}, {kindermann@uni-trier.de}, {mann@uni-trier.de},  {ueverton@ic.uff.br}.
}}

\begin{titlepage}
\def\thepage{}
\thispagestyle{empty}
\maketitle
\begin{textblock}{20}(0, 13.3)
\includegraphics[width=40px]{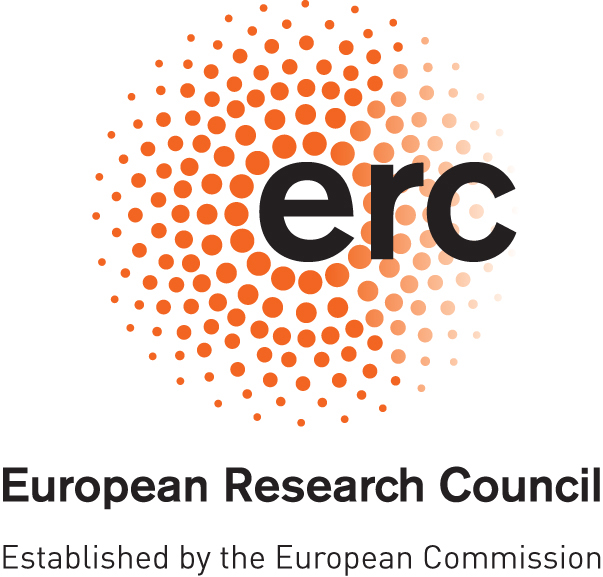}%
\end{textblock}
\begin{textblock}{20}(0, 14.1)
\includegraphics[width=40px]{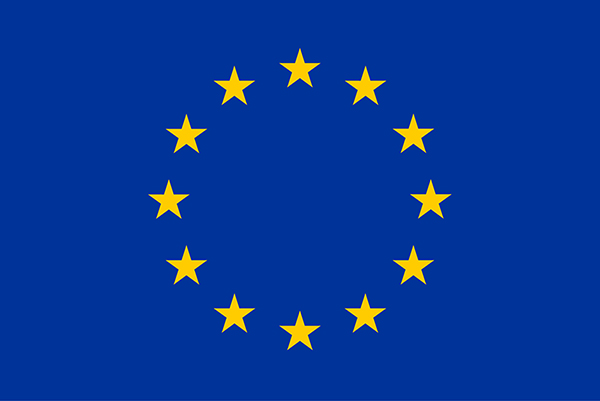}%
\end{textblock}
}

\begin{abstract}
A graph~$G$ is well-covered if every minimal vertex cover of~$G$ is minimum,  and a graph~$G$ is well-dominated if every minimal dominating set of~$G$ is minimum.  
Studies on well-covered graphs were initiated in [Plummer,  JCT~1970], and well-dominated graphs were first introduced in  [Finbow,  Hartnell and Nowakow,  AC~1988]. Well-dominated graphs are well-covered, and both classes have been widely studied in the literature. The recognition of well-covered graphs was proved {\sf coNP}-complete by [Chv\'atal and Slater, AODM 1993] and by [Sankaranarayana and Stewart, Networks  1992], but the complexity of recognizing well-domi\-nated graphs has been left open since their introduction. We close this complexity gap by proving that recognizing well-dominated graphs is {\sf coNP}-complete. This solves a well-known open question (c.f. [Levit and Tankus,  DM 2017] and [G{\"{o}}z{\"{u}}pek,  Hujdurovic and Milani{\v c}, DMTCS  2017]), which was first asked in [Caro, Sebő and Tarsi, JAlg 1996].
Surprisingly, our proof is quite simple, although it was a long-standing open problem. 
Finally, we show that recognizing well-totally-domi\-nated graphs is {\sf coNP}-complete,  answering a question of [Bahad{\i}r,  Ekim,  and G{\"o}z{\"u}pek, AMC  2021].  
\medskip

\noindent {\bf Keywords: } well-covered,  well-dominated, well-totally-dominated, complexity.
\end{abstract}

\end{titlepage}

\section{Introduction}
\label{s_intro}

{\sc Minimum Dominating Set},  {\sc Maximum Independent Set},  and {\sc Minimum Vertex Cover} are some of the most important computational and combinatorial problems, having a number of ``real world'' relevant applications and appearing in a wide range of natural situations.  These problems cannot be solved in polynomial time unless $\textsf{P}=\textsf{NP}$, since they were proved to be {\sf NP}-hard in 1972 in the seminal paper of Karp~\cite{Karp72}.
In spite of this fact,  a {\em minimal} dominating set,  and a {\sl maximal} independent set of a graph can be found in polynomial time using a greedy algorithm.  Also, the complement of a maximal independent set is a minimal vertex cover, so the same applies for vertex covers.

In 1970,  Plummer~\cite{PLUMMER197091} defined \emph{well-covered} graphs as the class of graphs $G$ where every minimal vertex cover is also a minimum vertex cover. This is equivalent to requesting that all maximal independent sets have the same cardinality.  Therefore,  well-covered graphs form a natural graph class for which {\sc Maximum Independent Set}  and {\sc Minimum Vertex Cover} can be solved in polynomial time. 

In the 1990's, the problem of recognizing a graph in the class of well-covered graphs,  called \textsc{Well-Coveredness}, was independently proved to be {\sf {coNP}}-complete  by Chv\'atal and Slater~\cite{chvatal1993note} and  by Sankaranarayana and Stewart~\cite{SaSt92}.  
In addition, structural characterizations or polynomial-time algorithms for recognizing well-covered graphs were studied on claw-free graphs~\cite{lesk1984equimatchable,TANKUS1996293}, graphs without large cycles~\cite{doi:10.1002/jgt.3190180707},
block-cactus graphs~\cite{randerath1994characterization},
bipartite graphs~\cite{ravindra1977well},
graphs with large girth~\cite{FINBOW199344},  
$P_4$-sparse graphs~\cite{DBLP:journals/gc/KleinMM13},
planar, chordal,  and circular arc graphs~\cite{doi:10.1002/(SICI)1097-0118(199602)21:2<113::AID-JGT1>3.0.CO;2-U}, bounded degree graphs~\cite{fcf48a9c6bc04e0f8917664daa376300},
and perfect graphs of bounded clique size~\cite{DeanZ94}.
A survey on well-covered graphs due to Plummer from 1993 can be found in~\cite{surveyPlummer}.  
In addition,  Sankaranarayana and Stewart determined the complexity of several problems on well-covered graphs~\cite{SaSt92}. In 2011,  Brown and Hoshino~\cite{BROWN2011244} showed that recognizing well-covered graphs is {\sf {coNP}}-complete even when restricted to the family of circulant graphs.  In 2020, Alves, Couto, Faria, Gravier, Klein, and Souza~\cite{AlvesFKGSS20} studied the complexity of the {\sc Graph Sandwich Problem} for the property of being a well-covered graph whose vertex set can be partitioned into~$k$ independent sets and into~$\ell$ cliques for fixed integers~$k$ and~$\ell$, i.e., well-covered graphs that are also $(k,\ell)$-graphs.  In 2021, Faria and Souza~\cite{cocoon21} studied the complexity of {\sc Probe Problem} for the property of being a $(k,\ell)$-graph that is well-covered.  Also, a polynomial-time algorithm for recognizing some sparse-dense graphs that are well-covered can be found in~\cite{https://doi.org/10.48550/arxiv.2208.04408}.

Regarding parameterized complexity, in 2018,  Alves,  Dabrowski,  Faria,  Klein, Sau, and Souza proved that determining whether every minimal vertex cover of a given graph~$G$ has size~$k$ is fixed-parameter tractable with respect to~$k$, but the problem of determining whether every maximal independent set of~$G$ has size~$k$ is  {\sf {coW[2]}}-hard, considering~$k$ as parameter. 
This last result illustrates that when considering the ``wellness'' variant of graph problems~$\Pi$, there can be a leap in terms of complexity when analyzing the resulting well version of~$\Pi$.  Recall that {\sc $k$-Independent Set} is one of the canonical problems of the class {\sf W[1]} but its well version is hard for {\sf coW[2]}.  In 2019, Araújo,  Costa, Klein,  Sampaio, and Souza showed that the problem of determining whether every minimal vertex cover of a graph $G$ has size~$k$ admits a kernel having $\mathcal{O}(k)$ vertices.  In addition, parameterized algorithms for \textsc{Well-Coveredness} considering other structural parameterizations can also be found in~\cite{AlvesDFKSS18,DBLP:journals/dmtcs/AraujoCKSS19}. 

Since every maximal independent set is a minimal dominating set,  the well-covered graph class is a superclass of the class of graphs whose all minimal dominating sets have the same size.  Such graphs were first studied in 1988 by Finbow,  Hartnell, and Nowakow~\cite{FHN88} and are called \emph{well-dominated} graphs.  
The structure of well-dominated bipartite graphs and well-dominated graphs with no cycle of length less than~5 were analyzed in~\cite{FHN88}.  Also,  structural characterizations of well-dominated block graphs and unicyclic graphs were presented in~\cite{volkmann1990well}, and well-dominated chordal graphs were characterized in~\cite{doi:10.1002/(SICI)1097-0118(199602)21:2<113::AID-JGT1>3.0.CO;2-U,Wellirredundant}.
In 2011,  a characterization of 4-connected, 4-regular, claw-free, well-dominated graphs was given, see~\cite{GIONET20111225}.  
In 2017,  Levit and Tankus proved that every well-covered graph without cycles of lengths~4 and~5 is well-dominated~\cite{LEVIT20171793}.  In the same year,  G{\"{o}}z{\"{u}}pek,  Hujdurovic,  Milani{\v c}~\cite{dmtcs:milanic} presented a characterization of well-dominated graphs with domination number two and show that well-dominated graphs
can be recognized in polynomial time in any class of graphs with bounded domination number.
In 2021,  Anderson,  Kuenzel, and Rall~\cite{anderson2021well} showed that there are exactly eleven connected, well-dominated, triangle-free graphs whose domination number is at most~3, and Rall~\cite{https://doi.org/10.48550/arxiv.2105.09797} gave a complete characterization of nontrivial direct products that are well-dominated.  

Well-covered graphs~$G$ with no isolated vertices such that every maximal independent set has size $\frac{|V(G)|}{2}$ form the class of \emph{very well-covered graphs}.  Similarly,  one can define the \emph{very well-dominated graphs}.  Very well-covered graphs and very well-dominated graphs can be recognized in polynomial-time due to their structural characterizations, see~\cite{FAVARON1982177,Wellirredundant}.

Although well-dominated graphs have been widely studied from 1988 to nowadays, the complexity status of recognizing well-dominated graphs was unknown until this current work.
In this paper, we show that recognizing well-dominated graphs is {\sf coNP}-complete,  solving a well-known open question (c.f.~\cite{CARO1996137,dmtcs:milanic,LEVIT20171793}). To the best of our knowledge, the first time the recognition of well-dominated graphs was explicitly stated as an open question was in 1996 by Caro, Sebő and Tarsi~\cite{CARO1996137}. In~\cite{CARO1996137}, well-dominated graphs are called greedy instances of the {\sc Minimum Dominating Set} problem.

Besides that, analogously to well-dominated graphs, in 1997,  Hartnell and Rall~\cite{rall1997graphs} initiated the study of graphs all whose minimal total dominating sets are of the same size.  Such graphs are called 
\emph{well-totally-dominated} graphs.  In 2021,  Bahad{\i}r,  Ekim,  and G{\"o}z{\"u}pek~\cite{AMC2465} showed among other results that well-totally-dominated graphs having bounded total domination number can be recognized in polynomial time. They left as an open question the complexity of recognizing well-totally-dominated graphs.  In this paper,  we also  answer this question, showing that the recognition of well-totally-dominated graphs is {\sf coNP}-complete.

We consider the following ``wellness'' problems related to domination and covering.


\defproblema{Well-Coveredness} 
{A graph $G=(V,E)$.}
{Determine whether every minimal vertex cover of $G$ has the same size.

\emph{Note:} A vertex cover of $G$ is a subset of $V(G)$ intersecting all edges of $G$.}

\defproblema{Well-Domination} 
{A graph $G=(V,E)$.}
{Determine whether every minimal dominating set of $G$ has the same size.

\emph{Note:} A dominating set of $G$ is a subset $S\subseteq V(G)$ such that each vertex $v\in V(G)\setminus S$ has a neighbor in $S$.}

\defproblema{Well-Total Domination} 
{A graph $G=(V,E)$.}
{Determine whether whether every minimal total dominating set of~$G$ has the same size.

\emph{Note:} A total dominating set of~$G$ is a subset $S\subseteq V(G)$ such that any vertex of $G$ has a neighbor in~$S$, including vertices of~$S$.}

\defproblema{Well-Hitting Set} 
{An universe set $\mathcal{U}$ of elements, and a family $\mathcal{F}$ of subsets of elements of $\mathcal{U}$.}
{Determine whether every minimal hitting set of $(\mathcal{U},\mathcal{F})$ has the same size. 

\emph{Note:} a hitting set of $(\mathcal{U},\mathcal{F})$ is a subset of $\mathcal{U}$ intersecting every set in $\mathcal{F}$.}

\defproblema{Well-Set Cover} 
{An universe set $\mathcal{U}$ of elements, and a family $\mathcal{F}$ of subsets of elements of $\mathcal{U}$.}
{Determine whether every minimal set cover of $(\mathcal{U},\mathcal{F})$ has the same size. 

Note: a set cover of $(\mathcal{U},\mathcal{F})$ is a subset $\mathcal{S}$ of $\mathcal{F}$ such that every element of $\mathcal{U}$ is contained in at least one set in $\mathcal{S}$.}

\defproblema{Well-Hitting-Set Cover} 
{An universe set $\mathcal{U}$ of elements, and a family $\mathcal{F}$ of subsets of elements of $\mathcal{U}$.}
{Determine whether every minimal set cover and every minimal hitting set of $(\mathcal{U},\mathcal{F})$ has the same size. }

In this paper, we see that all these problems are {\sf coNP}-complete.


\section{Computational Complexity}

First, observe that all problems studied in this work are in {\sf coNP}, since any pair of minimal solutions having different sizes certifies {\em no}-instances for them.  In addition, the minimality of solutions for such problems can easily checked in polynomial time. A similar general observation is contained in~\cite{CARO1996137}. Thus, we focus on {\sf coNP}-hardness in this section.

\medskip

Any \textsc{Vertex Cover} instance can be interpreted as a \textsc{Hitting Set} instance or a \textsc{Set Cover} instance. Therefore,  the following corollary holds as a consequence of the {\sf coNP}-completeness of {\sc Well-Coveredness}~\cite{chvatal1993note,SaSt92}.  

\begin{corollary}\label{cor:WellHittingSet}
{\sc Well-Hitting Set} and {\sc Well-Set Cover} are {\sf coNP}-complete.
\end{corollary}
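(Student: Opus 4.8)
The plan is to exhibit explicit polynomial-time reductions from \textsc{Well-Coveredness} to each of the two target problems, using the fact that a vertex cover of a graph is exactly a hitting set for the edge set and exactly a set cover when edges are viewed as covering their endpoints. Since \textsc{Well-Coveredness} is already known to be {\sf coNP}-complete by~\cite{chvatal1993note,SaSt92}, and membership in {\sf coNP} for all these problems was observed at the start of the section, it suffices to show {\sf coNP}-hardness via these reductions.

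For \textsc{Well-Hitting Set}, given a graph $G=(V,E)$, I would set $\mathcal{U}\df V$ and $\mathcal{F}\df\{\,\{u,v\} : uv\in E\,\}$, i.e.\ the family consists of the two-element endpoint sets of the edges. A subset $S\subseteq V$ is then a hitting set of $(\mathcal{U},\mathcal{F})$ precisely when $S$ meets every edge, which is the definition of a vertex cover of $G$; moreover this correspondence preserves set inclusion, so minimal hitting sets correspond exactly to minimal vertex covers and sizes are identical. Hence every minimal hitting set of $(\mathcal{U},\mathcal{F})$ has the same size if and only if every minimal vertex cover of $G$ has the same size, giving a valid reduction.

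For \textsc{Well-Set Cover}, the roles are dualized: given $G=(V,E)$, I would take $\mathcal{U}\df E$ (the elements to be covered) and let $\mathcal{F}$ contain, for each vertex $v$, the set $F_v$ of edges incident to $v$. Then a subfamily $\mathcal{S}\subseteq\mathcal{F}$ is a set cover exactly when the corresponding vertex set is a vertex cover of $G$, and again the correspondence is inclusion-preserving and size-preserving, so minimal set covers align with minimal vertex covers. One should note the only mild caveat: isolated vertices and parallel/degenerate edges, but since \textsc{Well-Coveredness} instances can be taken to be simple graphs (and isolated vertices lie in no minimal vertex cover and in no covering subfamily), these cases do not disturb the bijection.

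I do not expect a genuine obstacle here, since both directions are essentially syntactic translations; the statement is called a corollary precisely because the content is that vertex covers, hitting sets, and set covers are interchangeable encodings of the same combinatorial object. The only point requiring a line of care is verifying that \emph{minimality} and \emph{cardinality} are preserved by the encodings rather than merely the notion of a feasible solution, but this follows immediately from the bijections being inclusion-preserving. Thus the corollary drops out directly from the known {\sf coNP}-completeness of \textsc{Well-Coveredness}.
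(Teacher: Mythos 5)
Your proposal is correct and matches the paper's approach exactly: the paper justifies this corollary with the one-line observation that any \textsc{Vertex Cover} instance can be read as a \textsc{Hitting Set} instance (universe $=$ vertices, sets $=$ edge endpoint pairs) or a \textsc{Set Cover} instance (universe $=$ edges, family $=$ vertex incidence sets), inheriting hardness from \textsc{Well-Coveredness}~\cite{chvatal1993note,SaSt92}. You simply spell out the inclusion-preserving correspondences and minor edge cases that the paper leaves implicit.
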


Recall that, for $n>1$,  any $n$-vertex connected bipartite graph~$G$ is well-dominated if and only if every minimal dominating set of~$G$ has size $\frac{n}{2}$, because any maximal independent set is also a minimal dominating set.  Thus,  bipartite well-dominated  graphs are very well-dominated and so recognized in polynomial time, since they are either a $C_4$ or the corona product of a connected graph with a $K_1$,  see~\cite{Wellirredundant}. Contrastingly, the next result shows that {\sc Well-Total Domination} on bipartite graphs is unlikely to be polynomial-time solvable.

\begin{theorem}
{\sc Well-Total Domination} on bipartite graphs is {\sf coNP}-complete.
\end{theorem}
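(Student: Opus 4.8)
The plan is to reduce from \textsc{Well-Domination} on general graphs, which is {\sf coNP}-complete (the central result of this paper); membership of \textsc{Well-Total Domination} in {\sf coNP} was already recorded at the start of this section, so it suffices to establish {\sf coNP}-hardness. Given a graph $H=(V,E)$, I would build a bipartite graph $B$ as a ``closed-neighborhood incidence'': take two disjoint copies $V_1=\{u_1 : u\in V\}$ and $V_2=\{u_2 : u\in V\}$ of the vertex set, and put an edge $u_1v_2$ exactly when $v\in N_H[u]$. Thus each diagonal pair $u_1u_2$ is an edge. The graph $B$ is bipartite with parts $V_1,V_2$, has $2|V|$ vertices, is computable in polynomial time, and has no isolated vertex (each $u_1$ keeps at least the neighbor $u_2$), so it admits total dominating sets.

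The heart of the argument is a decoupling lemma. For $S\subseteq V_1\cup V_2$ write $S_1=\{u : u_1\in S\}$ and $S_2=\{u : u_2\in S\}$. First I would show that $S$ is a total dominating set of $B$ if and only if both $S_1$ and $S_2$ are dominating sets of $H$: a vertex $v_2$ has a neighbor in $S$ iff $N_H[v]\cap S_1\neq\emptyset$, so all of $V_2$ is totally dominated iff $S_1$ dominates $H$, and symmetrically for $V_1$ and $S_2$. Crucially, the totality requirement on a member $u_1\in S$ is then automatic, since the neighborhood of $u_1$ meets $S_2$ precisely because $S_2$ dominates $u$. Next I would upgrade this to minimality: because the total-domination constraint on $V_2$ depends only on $S_1$ and that on $V_1$ only on $S_2$, deleting a copy $u_1$ keeps $S$ a total dominating set iff $S_1\setminus\{u\}$ still dominates $H$. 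Hence $S$ is a \emph{minimal} total dominating set of $B$ iff $S_1$ and $S_2$ are each \emph{minimal} dominating sets of $H$, giving a size-additive bijection between minimal total dominating sets of $B$ and ordered pairs of minimal dominating sets of $H$, with $|S|=|S_1|+|S_2|$.

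With the lemma in hand the reduction closes immediately. The graph $B$ is well-totally-dominated iff the value $|S_1|+|S_2|$ is the same for every independent choice of minimal dominating sets $S_1,S_2$ of $H$; taking $S_1=S_2$ versus $S_1,S_2$ of differing sizes shows this holds iff all minimal dominating sets of $H$ have the same cardinality, i.e.\ iff $H$ is well-dominated. Therefore $H$ is a yes-instance of \textsc{Well-Domination} iff $B$ is a yes-instance of \textsc{Well-Total Domination}, and since $B$ is bipartite this transfers {\sf coNP}-hardness to bipartite inputs.

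I expect the main obstacle to be the minimality half of the decoupling lemma: one must argue that a deletion on one side can never be ``repaired'' by the other side. This is exactly where the closed-neighborhood (rather than open-neighborhood) incidence is essential, since it makes the total-domination requirements on $V_1$ and on $V_2$ depend on \emph{disjoint} parts of $S$, so the two minimality conditions genuinely separate; an open-neighborhood incidence would entangle the sides and destroy the clean bijection. A secondary point to verify is correct behavior on degenerate inputs (isolated vertices of $H$, or $H$ disconnected), which the closed-neighborhood definition handles gracefully. If a fully self-contained argument is preferred, one simply composes this construction with the reduction used to prove \textsc{Well-Domination} {\sf coNP}-hard, the output remaining bipartite.
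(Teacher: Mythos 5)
Your proof is correct, but it takes a genuinely different route from the paper's. The paper reduces directly from \textsc{Well-Hitting Set} (its Corollary~1): given $(\mathcal{U},\mathcal{F})$ with every hitting set of size at least two, it builds the bipartite incidence graph on element-vertices $v_u$ and set-vertices $w_F$, augmented with an apex $s$ adjacent to all $v_u$ and a pendant $t$ that forces $s$ into every minimal total dominating set; minimal total dominating sets are then exactly $\{s\}$ together with the copies of minimal hitting sets, shifting sizes by one. You instead reduce from \textsc{Well-Domination} via the closed-neighborhood bipartite double cover $B$ (biadjacency matrix $A+I$), and your decoupling lemma --- $S$ is a (minimal) total dominating set of $B$ if and only if the traces $S_1,S_2$ are (minimal) dominating sets of $H$ --- is sound: total domination is monotone, so minimality reduces to single-vertex deletions, which your lemma handles exactly, yielding the size-additive bijection with ordered pairs and hence the equivalence with well-domination of $H$. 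Two comparative remarks: first, there is no circularity in using \textsc{Well-Domination} even though it appears \emph{after} this theorem in the paper, since its proof rests only on \textsc{Well-Hitting Set}, but you should state this ordering explicitly (or reduce from \textsc{Well-Hitting Set} directly); second, the paper's construction buys more than yours, as its set-vertices can be made degree two and the $\{s\}\cup\{v_u:u\in\mathcal{U}\}$ side can be completed into a clique, immediately giving {\sf coNP}-completeness on 2-degenerate bipartite graphs and on split graphs (Corollary~2), strengthenings your double cover does not directly provide (degrees in $B$ are $\deg_H+1$, and $B$ is never split beyond trivial cases); conversely, your argument avoids the paper's side condition on hitting sets, handles degenerate inputs uniformly, and its bijection is arguably more transparent.
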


\begin{proof}
Let $H=(\mathcal{U},\mathcal{F})$ be a hypergraph ($\mathcal{F}\subseteq 2^\mathcal{U}$) such that each hitting set has at least two element. Then define $G=(V,E)$ with 
\begin{eqnarray*}
        V&=& \{s,t\} \cup \{v_u\mid u\in \mathcal{U}\} \cup \{ w_F \mid F\in \mathcal{F}\},\\
        E&=& \{\{s,t\}\} \cup \{\{s,v_u\}\mid u\in U\}\cup \{ \{ v_u,w_F\} \mid u\in F \mbox{ and } F \in \mathcal{F} \}.
\end{eqnarray*}

Since $t$ has only the neighbor~$s$,  vertex~$s$ has to be in each total dominating set.

Let $Z$ be a minimal hitting set of~$H$.  Define $D= \{s\}\cup \{v_z\mid z\in Z\}$. 
We want to show that $D$ is a minimal total dominating set.  As $s\in D$, the vertices in $\{t\}\cup \{v_u \mid u\in \mathcal{U}\}$ are dominated.  Since~$Z$ is not empty, vertex~$s$ is also dominated.  Let $F\in \mathcal{F}$. Then there exists some $u\in Z\cap F$. This implies there exists a  vertex $v_u\in D\cap N(w_F)$.  Therefore, $D$ is a total dominating set of~$G$.  As mentioned before,  $s$ has to be in~$D$ for total domination reasons.  Assume there exists a $v_u\in D$ such that $D\setminus \{v_u\}$ is a total dominating set of $G$. This implies that for each $w_F\in N(v_u)$, there exists a $v_{y_F}\in (N(w_F)\cap D)\setminus \{v_u \}$.  Hence, for each $F\in \mathcal{F}$ with $u\in F$, there exists a $y_F\in F\cap Z$ such that  $y_F\neq u$. This contradicts the minimality of~$Z$.

Let $D$ be a minimal total dominating set of~$G$.  As mentioned before, $s\in D$.  
For each $F\in \mathcal{F}$, $N(w_F)\subseteq \{v_u\mid u\in \mathcal{U}\} \subseteq N(s)$ holds. 
Therefore, $w_F$ is in no minimal total dominating set. 
But since $w_F$ is not dominated by~$s$, there has to be at least one element in $D\cap \{v_u\mid u\in \mathcal{U}\}$,  implying that~$t$ cannot be in a minimal total dominating set.  
Therefore, each minimal total dominating set of~$G$ is a subset of $\{s\}\cup  \{v_u\mid u\in \mathcal{U}\}$. Define $Z=\{u \mid v_u \in D\}$. As~$D$ is a total dominating set, for each $F\in \mathcal{F}$, $w_F$ is dominated. Thus, $Z$ is a hitting set of~$H$.  
Assume that~$Z$ is not minimal.  
This implies the existence of a $u\in Z$ such that for each $F\in \mathcal{F}$ having $u\in F$, there exists a $z_F\in (F \cap Z)\setminus\{u\}$. This implies that for each $w_F\in N(v_u)$, there exists a $v_{z_F}\in (N(w_F)\cap D)\setminus\{v_u\}$. This contradicts the minimality of~$D$.

Therefore, for each minimal hitting set~$Z$ of~$H$, there exists a minimal total dominating set $D_Z$ of~$G$ with $\vert D_Z\vert = \vert Z\vert + 1 $. Conversely, for each minimal total dominating set~$D$ of~$G$, there exists a  minimal hitting set~$Z_D$ of~$H$ with $\vert Z_D\vert = \vert D\vert - 1 $.  
Thus, $H=(\mathcal{U},\mathcal{F})$ is a {\em yes}-instance of {\sc Well-Hitting Set}  if and only if $G$ is a {\em yes}-instance of {\sc Well-Total Domination}.  Therefore,  by Corollary~\ref{cor:WellHittingSet} the claim holds. \qed
\end{proof}

The argument would also hold if $H$ is a {\sc Vertex Cover} instance.  In this case, all vertices in $\{w_F \mid F\in \mathcal{F}\}$ would have degree 2 and $G[V\setminus \{ w_F \mid F\in \mathcal{F}\}]$ is a tree. Therefore, $G$ would be a 2-degenerate graph.  
If we would define $\{s\} \cup \{v_u\mid u\in \mathcal{U}\}$ as a clique, it would not change the argument, either. This modification turns~$G$ into a split graph.  Hence, the following holds.
 
\begin{corollary}\label{cor:totaldom_split}
 {\sc Well-Total Domination} is {\sf coNP}-complete on split or {2-degenerate} bipartite graphs.
\end{corollary}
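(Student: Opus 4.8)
The plan is to reuse the reduction from the preceding theorem almost verbatim, but to start from a more restricted source problem and to make two small, independent modifications to the target graph~$G$. Membership in {\sf coNP} is already established, so only hardness is at stake. For the source I would take {\sc Well-Coveredness}, which is {\sf coNP}-complete by~\cite{chvatal1993note,SaSt92}: viewing a graph as a hypergraph whose sets are its (two-element) edges, {\sc Well-Coveredness} is exactly {\sc Well-Hitting Set} restricted to instances in which every $F\in\mathcal{F}$ has $|F|=2$, as already used in Corollary~\ref{cor:WellHittingSet}. I would first note that the theorem's hypothesis ``each hitting set has at least two elements'' is harmless here: an instance with a minimal vertex cover of size at most one is a star or edgeless, hence trivially recognizable, so discarding such instances costs no hardness. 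This feeds the construction exactly the instances it requires.

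For the $2$-degenerate bipartite claim I would apply the construction unchanged to a two-uniform instance. Since every $F$ has exactly two elements, each $w_F$ has degree exactly~$2$; deleting all of the $w_F$ leaves the star with centre~$s$ and leaves $\{t\}\cup\{v_u\mid u\in\mathcal{U}\}$, a tree. Hence $G$ is $2$-degenerate: repeatedly delete the degree-$2$ vertices~$w_F$, after which only a $1$-degenerate star remains. Bipartiteness I would certify directly with the bipartition $\{s\}\cup\{w_F\mid F\in\mathcal{F}\}$ versus $\{t\}\cup\{v_u\mid u\in\mathcal{U}\}$, checking that each of the three edge types $\{s,t\}$, $\{s,v_u\}$, $\{v_u,w_F\}$ crosses it. The equivalence between minimal total dominating sets of~$G$ and minimal hitting sets of~$H$ proved in the theorem is untouched, so this already yields {\sf coNP}-completeness on $2$-degenerate bipartite graphs.

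For the split claim I would instead turn $\{s\}\cup\{v_u\mid u\in\mathcal{U}\}$ into a clique, keeping $t$ pendant to~$s$ and every edge $\{v_u,w_F\}$. Then $V$ partitions into the clique $\{s\}\cup\{v_u\mid u\in\mathcal{U}\}$ and the independent set $\{t\}\cup\{w_F\mid F\in\mathcal{F}\}$, so $G$ is split. The point is that the theorem's proof survives this change: it uses only that $s$ is the unique neighbour of~$t$, that $N(w_F)\subseteq\{v_u\mid u\in\mathcal{U}\}\subseteq N(s)$, and that $s$ dominates all of $\{t\}\cup\{v_u\mid u\in\mathcal{U}\}$. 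The clique edges preserve all three facts, so $s$ still belongs to every total dominating set, while $t$ and every $w_F$ still belong to no minimal one, and the size-preserving correspondence $D\leftrightarrow Z=\{u\mid v_u\in D\}$ with $|D|=|Z|+1$ is unchanged.

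The step I expect to need the most care is precisely this last verification: after making the $v_u$ mutually adjacent I must confirm that no $v_u$ becomes essential for dominating~$s$ or another~$v_{u'}$, which would break the bijection with minimal hitting sets. The check is that $s$ alone totally dominates the whole clique side together with~$t$, so the only genuine domination demands still come from the~$w_F$, whence minimality of~$D$ corresponds exactly to minimality of~$Z$ as before. With that settled, Corollary~\ref{cor:WellHittingSet}, specialised to two-uniform instances (i.e.\ {\sc Well-Coveredness}), transfers the hardness through both constructions.
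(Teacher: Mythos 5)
Your proposal is correct and takes essentially the same route as the paper: the paper likewise specialises the reduction to \textsc{Vertex Cover} (two-uniform) instances, noting that each $w_F$ then has degree~$2$ and $G[V\setminus\{w_F\mid F\in\mathcal{F}\}]$ is a tree (hence $G$ is $2$-degenerate bipartite), and obtains the split case by turning $\{s\}\cup\{v_u\mid u\in\mathcal{U}\}$ into a clique while observing the argument is unchanged. Your additional checks---discarding instances with cover number at most one, and verifying that the clique edges do not disturb the minimality correspondence since $s$ already dominates the whole clique side---merely make explicit what the paper leaves implicit.
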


This result on split graphs is interesting insofar, as  {\sc Well-Domination} on chordal graphs is solvable in polynomial time,  see~\cite{Wellirredundant}. This indicates that {\sc Well-Total Domination} tends to be a more difficult problem to solve than {\sc Well-Domination}.  
However,  next result shows that on general graphs the {\sc Well-Domination} problem is also {\sf coNP}-complete.

\begin{theorem}\label{thm:NPcomplete}
{\sc Well-Domination} is {\sf coNP}-complete.
\end{theorem}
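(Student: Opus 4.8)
The plan is to deduce coNP-hardness from that of {\sc Well-Coveredness} by invoking the theorem of Levit and Tankus~\cite{LEVIT20171793}, which states that every well-covered graph with no cycle of length~$4$ or~$5$ is well-dominated. Since, conversely, every well-dominated graph is well-covered (each maximal independent set is a minimal dominating set), on the class of $\{C_4,C_5\}$-free graphs the two properties coincide: such a graph is well-dominated if and only if it is well-covered. Hence it suffices to prove that {\sc Well-Coveredness} remains coNP-hard when the input is restricted to graphs containing no $4$- and no $5$-cycle; any such graph is then simultaneously a valid instance of {\sc Well-Domination} with the same answer, and membership in coNP has already been observed at the beginning of this section.

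To produce such instances I would start from the classical coNP-hardness reductions for {\sc Well-Coveredness}~\cite{chvatal1993note,SaSt92} (equivalently, from {\sc Well-Hitting Set} via Corollary~\ref{cor:WellHittingSet}) and either verify, or enforce, that the constructed graph is $\{C_4,C_5\}$-free. The crucial observation is that the Levit--Tankus hypothesis forbids only $4$- and $5$-cycles while \emph{triangles are allowed}, so one stays safely inside the regime where recognising well-covered graphs is hard; graphs of girth at least~$5$ are, by contrast, easy to handle (see~\cite{FINBOW199344}), so one must not accidentally raise the girth too far. If the existing reduction already avoids $4$- and $5$-cycles, the statement follows at once. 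Otherwise I would post-process its output, replacing each offending configuration by a triangle-based gadget designed so that (i)~no new $4$- or $5$-cycle is created, and (ii)~there is a bijection between the maximal independent sets of the original and the modified graph that changes every set size by the \emph{same} additive constant.

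The main obstacle is precisely this last step: the transformation must destroy all $4$- and $5$-cycles while preserving the well-covered status, i.e.\ it must act \emph{uniformly} on the sizes of maximal independent sets. A gadget that shifted different maximal independent sets by different amounts would alter the set of attainable sizes and could convert a no-instance into a yes-instance or vice versa, so the bulk of the argument is the size-uniformity analysis of the gadget, together with a check that no short cycle is introduced across gadget boundaries. A direct alternative---building a graph whose minimal dominating sets are forced to coincide with the minimal hitting sets of a {\sc Well-Hitting Set} instance, in the spirit of the preceding proof---runs into the difficulty that an ordinary dominating set can dominate a ``constraint'' vertex through that vertex itself, producing spurious minimal dominating sets with no counterpart among the hitting sets; neutralising this self-domination without simultaneously trivialising the constraint is exactly what makes the route through Levit--Tankus the cleaner one.
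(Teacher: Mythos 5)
There is a decisive gap: the class you reduce into is one where the problem is known to be polynomial-time solvable, so your plan cannot yield {\sf coNP}-hardness. Well-covered graphs containing neither $4$- nor $5$-cycles were completely characterized by Finbow, Hartnell and Nowakowski (this is the paper cited here as~\cite{doi:10.1002/jgt.3190180707}, the companion to the girth-$\geq 5$ characterization~\cite{FINBOW199344}), and that structural characterization (essentially a partition into pendant edges and special triangles, plus finitely many exceptions) is checkable in polynomial time. Hence \textsc{Well-Coveredness} restricted to $\{C_4,C_5\}$-free graphs lies in {\sf P}; combined with the Levit--Tankus equivalence~\cite{LEVIT20171793} that you correctly set up, this shows \textsc{Well-Domination} is \emph{also} polynomial on this class---indeed this is precisely how Levit and Tankus obtain their recognition consequence. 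So your hoped-for post-processing gadget, which would destroy all $4$- and $5$-cycles while shifting every maximal independent set size by a uniform constant in polynomial time, cannot exist unless ${\sf P}={\sf coNP}$: its existence would compose with the known reductions to place \textsc{Well-Coveredness} in {\sf P}. Your caution about keeping triangles (to stay away from the girth-$\geq 5$ regime) does not save the argument, because the polynomial characterization already covers graphs with triangles. Note also that, even on its own terms, the proposal stops at a list of desiderata for the gadget rather than a construction, and the gadget is exactly where the entire difficulty was relocated.

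The paper's actual proof takes the direct route you dismissed at the end, and it is worth seeing how it neutralizes the self-domination problem you identified. It reduces from \textsc{Well-Hitting Set} (Corollary~\ref{cor:WellHittingSet}): given $(\mathcal{U},\mathcal{F})$ with minimal hitting set size $k$, it builds a clique $U$ on the universe with an apex $r$, together with $k-1$ clique copies $F_1,\dots,F_{k-1}$ of the family, each joined to $U$ by the incidence relation. The cliques $(U\cup\{r\}),F_1,\dots,F_{k-1}$ form a clique cover of size $k$, so any minimal dominating set meeting every clique of the cover has size exactly $k$; and a ``constraint'' vertex $v^i_j$ dominated through itself or through its own clique forces the dominating set into the transversal regime of size $k$, so such spurious solutions are harmless rather than eliminated. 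The only way a minimal dominating set can attain a size different from $k$ is to avoid $r$ and all the $F_i$ entirely, and then it must be a subset of $U$ dominating every copy, i.e., a minimal hitting set. Thus $G$ is well-dominated iff all minimal hitting sets have size $k$---no short-cycle analysis, no size-uniformity lemma, and no appeal to structural theory is needed.
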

\begin{proof}

Let $\mathcal{(U,F)}$ be an instance of {\sc Well-Hitting Set}, where $\mathcal{U}$ is the universe set and $\mathcal{F}$ is a family of subsets of $\mathcal{U}$.  Let $k$ be the size of a minimal hitting set of $\mathcal{(U,F)}$. 

Note that $\mathcal{(U,F)}$ is a {\em yes}-instance of a {\sc Well-Hitting Set} if and only if every minimal hitting set of $\mathcal{(U,F)}$ has size~$k$.  Since a minimal hitting set of $\mathcal{(U,F)}$ can be obtained in polynomial time, without loss of generality, we assume that~$k$ is given together with $\mathcal{(U,F)}$ and we are asked if every minimal hitting set of $\mathcal{(U,F)}$ has size~$k$.

From $\mathcal{(U,F)}$ we construct a graph~$G$ as follows:
\begin{enumerate}
    \item Define $$V(G)=\{r\}\cup U\cup F_1, F_2\cup \ldots\cup F_{k-1},$$ where~$U$ is a vertex set of size $|\mathcal{U}|$ such that each $v_u\in U$ represents a distinct element~$u$ of the universe~$\mathcal{U}$, and each $F_i$ is a vertex set of size $|\mathcal{F}|$ where each vertex $v_j^i$ of $F_i$ represents a set $S_j\in \mathcal{F}$.  

    \item Define $U$ and each vertex set $F_i$ as cliques of~$G$.
    
    \item Add an edge between a vertex $v_j^i \in F_i$ and a vertex $v_u\in U$ if $u\in S_j$. Note that the subgraph $G[U\cup F_i]$ is isomorphic to the bipartite incidence graph of $\mathcal{(U,F)}$.
    \item Add an edge between $r$ and each vertex in $U$.
\end{enumerate}
This completes the construction of $G$.

Now, we argue that every minimal hitting set of $\mathcal{(U,F)}$ has size~$k$ if and only if $G$ is well dominated.

Notice that~$G$ has a clique cover~$C$ of size~$k$ formed by the cliques $(U\cup\{r\}),F_1,\ldots,F_{k-2}$, and $F_{k-1}$. Also, any minimal dominating set of~$G$ that contains at least one vertex per clique of~$C$, by minimality, must contain exactly one vertex per clique (hence, it has size~$k$). By taking $r$ and one vertex per clique~$F_i$, we know that~$G$ has a minimal dominating set of size~$k$. Besides, by construction, any minimal dominating set of~$G$ contains either $r$ or some vertex of~$U$. Therefore, any minimal dominating set of~$G$ having size different from~$k$ must contain no vertex of $F_1\cup \ldots \cup F_{k-1} \cup \{r\}$, but contain a subset of vertices of~$U$ that dominate each $F_i$. Since $G[U\cup F_i]$ is isomorphic to the bipartite incidence graph of $\mathcal{(U,F)}$, it holds that~$G$ has a minimal dominating set of~$G$ having size different from~$k$ ($G$ is not well-dominated) if and only if $\mathcal{(U,F)}$ has a minimal hitting set of size different than $k$ (so, $\mathcal{(U,F)}$ is not a {\em yes}-instance of {\sc Well-Hitting Set}). \qed
\end{proof}

Finally,  recall that \textsc{Dominating Set} and \textsc{Total Dominating Set} instances~$G$ can be interpreted as \textsc{Hitting Set} or \textsc{Set Cover} instances where the elements of $\mathcal{U}$ are the vertices of $G$ and the family $\mathcal{F}$ is formed by the close neighborhood or the open neighborhood of the vertices of $G$, respectively. 
Also,  from the incidence bipartite graph of the resulting instance $(\mathcal{U},\mathcal{F})$, it is easy to see that the existence of a hitting set of size~$k$ in $(\mathcal{U},\mathcal{F})$ also implies the existence of a set cover having the same size in $(\mathcal{U},\mathcal{F})$,  and vice versa.  Hence,  the following corollary holds as a consequence of the {\sf coNP}-completeness of {\sc Well-Domination} and {\sc Well-Total Domination}.  

\begin{corollary}\label{cor:WellHittingSetCover}
{\sc Well-Hitting-Set Cover} is {\sf coNP}-complete.
\end{corollary}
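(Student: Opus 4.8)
The plan is to reduce from {\sc Well-Domination}, which is {\sf coNP}-complete by \cref{thm:NPcomplete}; one could equally start from {\sc Well-Total Domination} (\cref{cor:totaldom_split}) using open neighborhoods instead of closed ones. Given a graph $G=(V,E)$, I would build the instance $(\mathcal{U},\mathcal{F})$ of {\sc Well-Hitting-Set Cover} by taking $\mathcal{U}=V$ and letting $\mathcal{F}=\{N[v]\mid v\in V\}$ be the family of closed neighborhoods, indexed by the vertices of $G$ so that coinciding neighborhoods are kept as distinct members. This is computable in polynomial time, and membership of {\sc Well-Hitting-Set Cover} in {\sf coNP} was already observed at the start of this section, so only {\sf coNP}-hardness remains.

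The crux is the symmetry of the incidence bipartite graph of $(\mathcal{U},\mathcal{F})$: because $u\in N[v]$ if and only if $v\in N[u]$, the same incidence relation is read on both sides of this bipartite graph. First I would exploit this to match solutions with dominating sets of $G$. Identifying a subfamily of $\mathcal{F}$ with the set of vertices indexing it, a set $X\subseteq V$ is a hitting set of $(\mathcal{U},\mathcal{F})$ if and only if $X$ is a dominating set of $G$ (every $N[v]$ is hit exactly when every vertex $v$ is dominated by $X$), and the subfamily $\{N[v]\mid v\in X\}$ is a set cover of $(\mathcal{U},\mathcal{F})$ if and only if $X$ is a dominating set of $G$ (each element $u$ lies in some chosen $N[v]$ exactly when $u$ is dominated by $X$). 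Both conditions unwind, via the symmetry, to the single statement that $X$ dominates $G$.

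Next I would verify that inclusion-minimality transfers cleanly across this correspondence, which is really the only delicate point. Using the private-neighbor characterization, $X$ is a minimal dominating set of $G$ precisely when every $x\in X$ admits a vertex $p$ with $N[p]\cap X=\{x\}$; by the same symmetry, this is exactly the condition that $X$ is an inclusion-minimal hitting set and, simultaneously, that $\{N[v]\mid v\in X\}$ is an inclusion-minimal set cover. Hence the set of sizes attained by minimal hitting sets, by minimal set covers, and by minimal dominating sets of $G$ all coincide. I therefore expect the neighborhood symmetry, not any counting, to be the load-bearing step: it is what forces the three distinct notions of minimality to agree.

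Finally I would conclude that all minimal hitting sets and all minimal set covers of $(\mathcal{U},\mathcal{F})$ share one common size if and only if all minimal dominating sets of $G$ have the same size; that is, $(\mathcal{U},\mathcal{F})$ is a {\em yes}-instance of {\sc Well-Hitting-Set Cover} if and only if $G$ is well-dominated. Since {\sc Well-Domination} is {\sf coNP}-complete and the reduction is polynomial, {\sf coNP}-hardness of {\sc Well-Hitting-Set Cover} follows, and together with the membership observation this gives {\sf coNP}-completeness.
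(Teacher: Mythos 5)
Your proposal is correct and follows essentially the same route as the paper: the paper likewise interprets a {\sc Well-Domination} instance $G$ as the instance $(\mathcal{U},\mathcal{F})$ with $\mathcal{U}=V(G)$ and $\mathcal{F}$ the closed neighborhoods (open neighborhoods for the {\sc Well-Total Domination} variant), using the symmetry of the incidence bipartite graph to match hitting sets and set covers of equal size. The only difference is one of detail, not of method: you make explicit, via the private-neighbor characterization, the transfer of inclusion-minimality among the three notions, a step the paper leaves implicit in its brief derivation.
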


Corollary~\ref{cor:WellHittingSetCover} completes a {\sf P} versus {\sf coNP}-complete dichotomy regarding well domination problems on bipartite instances. Let $B$ be a bipartite graph having vertex set bipartition $V(B)=V_{\mathcal U}\cup V_{\mathcal F}$.
If the question is if all minimal subsets of $V_{\mathcal U}$ that dominate $V_{\mathcal F}$ have the same size, we are dealing with {\sc Well-Hitting Set}. The converse would be {\sc Well-Set Cover}. Also, one could consider, at the same time, all minimal subsets of $V_{\mathcal U}$ that dominate $V_{\mathcal F}$ and all minimal subsets of $V_{\mathcal F}$ that dominate $V_{\mathcal U}$ having all of them the same size, which is precisely the {\sc Well-Hitting-Set Cover} problem. However, if asked about all minimal subsets of $V_{\mathcal U}\cup V_{\mathcal F}$ that dominate $V_{\mathcal U}\cup V_{\mathcal F}$ having the same size then we are dealing with {\sc Well-Domination} on bipartite graphs, which is polynomial-time solvable, as previously discussed. Therefore, the hardness result of Corollary~\ref{cor:WellHittingSetCover} is tight concerning these constraints.


\section{Conclusions}

We have shown that well variations of a number of combinatorial properties is complete for the complexity class \textsf{coNP}. One algorithmic interpretation of the well variations is that this defines a graph class where a natural greedy strategy always finds the optimum. One could actually relax this requirement and ask, say, in the case of \textsc{Maximum Independent Set}, when the greedy heuristic that always picks a vertex of smallest degree next achieves, say, a factor-2 approximation. (Recall that in general, \textsc{Maximum Independent Set} does not allow polynomial-time constant-factor approximation algorithms.) Alas, determining if a graph can be approximated by a factor of two in this way is again a problem complete for \textsf{coNP}, as shown in \cite{BodThiYam97}. It might be interesting to ask similar questions for  \textsc{Minimum Dominating Set}, for instance, seeing the flavor of results in this paper. 
In 1996, Caro, Sebő and Tarsi~\cite{CARO1996137} had also pointed as a potentially interesting direction the study of instances where a greedy algorithm always guarantees to provide a good approximation of the optimal goal. 

There is also a further combinatorial way of looking at and possibly generalizing the questions discussed in this paper. For instance, one could interpret the \textsc{Well-Domination} problem as asking to decide, for a given graph~$G$, if $\gamma(G)=\Gamma(G)$, i.e., if the lower and upper domination numbers of~$G$ coincide. Likewise, \textsc{Well-Coveredness} aks if $\iota(G)=\alpha(G)$, i.e., if the independent domination number and the independence number of~$G$ coincide. In this spirit, one could ask similar decision questions for other parameters of the famous domination chain, as introduced in~\cite{CocHedMil78}.
For (a survey of) more recent computational results, we refer to 
\cite{DBLP:journals/dam/BazganBCF20,DBLP:journals/tcs/BazganBCFJKLLMP18}. As it is known that all these parameters can differ arbitrarily, the approximation questions discussed in the previous paragraph can be asked analogously for any pair of parameters of the domination chain, or similarly for other graph parameters where inequality relations are known.

\section*{Acknowledgement}

This common project was initiated at the Graphmasters workshop that took place after IWOCA 2022, on June 10th, in Trier, Germany.

Uéverton S. Souza thanks Martin Milanič and András Sebő for presenting him with the problem of recognizing well-dominated graphs on the occasion of the IX Latin and American Algorithms, Graphs and Optimization Symposium (LAGOS 2017).

\bibliographystyle{plainurl}
\bibliography{references}

\end{document}